\documentclass[10pt]{amsart}

\usepackage[T1]{fontenc}

\usepackage[english]{babel}
\usepackage{csquotes}

\usepackage[a4paper,margin=3cm]{geometry}
\allowdisplaybreaks
\usepackage{graphicx}
\usepackage[dvipsnames]{xcolor}
\usepackage{caption}
\usepackage{subcaption}

\usepackage{tikz}
\usetikzlibrary{
  babel,       
  positioning,
  arrows.meta
}
\usepackage{tikz-cd}
\usepackage{pgfplots}
\pgfplotsset{compat=1.18}

\usepackage{mathtools}
\usepackage{amssymb}

\usepackage{esint}      
\usepackage{mathrsfs}  
\usepackage{faktor}     
\usepackage{dsfont}    

\usepackage[shortlabels]{enumitem}

\usepackage{array}
\usepackage{hhline}
\usepackage[normalem]{ulem}
\usepackage{comment}
\usepackage[toc,page]{appendix}
\usepackage{imakeidx}
\usepackage{fancyhdr}
\usepackage{ifthen}
\usepackage{forloop}
\usepackage{xstring}
\usepackage{emptypage}
\usepackage{listings}

\usepackage[
  backend=biber,
  style=numeric,
  sorting=nty,
  giveninits=true,
  maxnames=10,
  doi=false,
  isbn=false,
  url=false,
  language=auto,
  autolang=other
]{biblatex}

\usepackage[
  hyperfootnotes=false
]{hyperref}

\hypersetup{
  colorlinks=true,
  linkcolor=blue,
  citecolor=blue,
  urlcolor=red
}

\usepackage{aliascnt}
\usepackage[
  nameinlink,
  capitalise,
  sort
]{cleveref}

\crefname{equation}{}{}
\Crefname{equation}{}{}
\creflabelformat{equation}{#2(#1)#3}
\crefname{enumi}{}{}
\Crefname{enumi}{}{}

\newcommand*{\newsharedthm}[2]{%
  \newaliascnt{#1}{lemma}%
  \newtheorem{#1}[#1]{#2}%
  \aliascntresetthe{#1}%
}

\theoremstyle{plain}

\newtheorem{lemma}{Lemma}[section]

\newsharedthm{theorem}{Theorem}
\newsharedthm{proposition}{Proposition}
\newsharedthm{corollary}{Corollary}

\theoremstyle{definition}

\newsharedthm{definition}{Definition}
\newsharedthm{conjecture}{Conjecture}
\newsharedthm{claim}{Claim}
\newsharedthm{assumption}{Assumption}

\theoremstyle{remark}

\newsharedthm{remark}{Remark}
\newsharedthm{example}{Example}
\newsharedthm{notation}{Notation}

\newcommand*{\setcrefname}[3]{%
  \crefname{#1}{#2}{#3}%
  \Crefname{#1}{#2}{#3}%
}

\setcrefname{lemma}{Lemma}{Lemmas}
\setcrefname{theorem}{Theorem}{Theorems}
\setcrefname{proposition}{Proposition}{Propositions}
\setcrefname{corollary}{Corollary}{Corollaries}
\setcrefname{definition}{Definition}{Definitions}
\setcrefname{conjecture}{Conjecture}{Conjectures}
\setcrefname{claim}{Claim}{Claims}
\setcrefname{assumption}{Assumption}{Assumptions}
\setcrefname{remark}{Remark}{Remarks}
\setcrefname{example}{Example}{Examples}
\setcrefname{notation}{Notation}{Notations}

\newlist{thmenum}{enumerate}{1}

\setlist[thmenum]{
  label=(\roman*),
  ref=\thetheorem(\roman*)
}
\setcrefname{thmenumi}{Theorem}{Theorems}

\numberwithin{equation}{section}

\usepackage{yhmath}

\newcommand{\R}{\mathbb R}
\newcommand{\Id}{{\rm Id}}
\DeclareMathOperator{\supp}{supp}
\newcommand{\step}[1]{\uline{Step #1.}}

\begin{document}

\title[Superharmonicity of the fractional ground state]
  {Superharmonicity of the fractional ground state}

\begin{abstract}
Let $s\in(0,1)$. We prove that the positive ground state $u$ of the fractional Laplacian $(-\Delta)^s$ on an arbitrary open set
$\Omega$, whenever it exists, satisfies
$
 -\Delta u>\lambda_s(\Omega)^{1/s}u
$
in $\Omega$, where $\lambda_s(\Omega)$ denotes the corresponding first
eigenvalue. In a ball $B_R$, we also obtain the quantitative log-concavity estimate
$
 D^2\log u(x)\le D^2\log u(0)
 <-\frac{\lambda_s(B_R)^{1/s}}{n}\,\mathrm{Id}.
$
\end{abstract}

\keywords{Fractional Laplacian, first eigenfunction, ground state,
superharmonicity}

\subjclass[2020]{35R11, 35P05, 47A60, 31C25, 35B51}

\author[N.~De Nitti]{Nicola De Nitti}
\address[N.~De Nitti]{Politecnico di Bari, Dipartimento di Meccanica,
Matematica e Management, Via E.~Orabona 4, 70125 Italy.}
\email[]{nicola.denitti@poliba.it}

\author[X.~Fern\'andez-Real]{Xavier Fern\'andez-Real}
\address[X.~Fern\'andez-Real]{EPFL, Institut de Math\'ematiques,
Station 8, 1015 Lausanne, Switzerland.}
\email[]{xavier.fernandez-real@epfl.ch}

\maketitle

\section{Introduction and main results}\label{sec:introduction}

Let $n\ge1$, let $s\in(0,1)$, and let $\Omega\subset\R^n$ be an
open set. We consider the exterior Dirichlet problem
\[
 \left\{
 \begin{array}{rcll}
  (-\Delta)^s u&=&\lambda u&\text{in }\Omega,\\
  u&=&0&\text{in }\R^n\setminus\Omega.
 \end{array}
 \right.
\]
For $u\in\mathcal S(\R^n)$, the fractional Laplacian is defined by
\[
 (-\Delta)^s u(x)\coloneqq c_{n,s}\,\mathrm{P.V.}\int_{\R^n}\frac{u(x)-u(x+y)}{|y|^{n+2s}}\,\mathrm dy,
\quad\text{where}\quad 
 c_{n,s}\coloneqq 2^{2s}s\,\frac{\Gamma\!\left(\frac{n+2s}{2}\right)}{\Gamma(1-s)}\pi^{-n/2}.
\]

Let $H^s(\R^n)$ be the usual fractional Sobolev space. We set
\[
 H_0^s(\Omega)\coloneqq \overline{C_c^\infty(\Omega)}^{\,H^s(\R^n)}
\]
and identify its elements with their zero extensions to $\R^n$. The
bilinear form associated with $(-\Delta)^s$ is
\[
 \langle u,v\rangle_s\coloneqq \frac{c_{n,s}}2\iint_{\R^n\times\R^n}\frac{(u(x)-u(y))(v(x)-v(y))}{|x-y|^{n+2s}}\,\mathrm dx\,\mathrm dy;
\]
see \cite{FernandezRealRosOton2024}. We say that
$(\lambda,u)\in\R\times(H_0^s(\Omega)\setminus\{0\})$ is a \emph{weak eigenpair}
(formed by a \emph{weak eigenvalue} and a \emph{weak eigenfunction}) if
\begin{equation}\label{eq:weak-eigen}
 \langle u,\varphi\rangle_s=\lambda\int_\Omega u\varphi\,\mathrm dx\qquad\text{for every }\varphi\in H_0^s(\Omega).
\end{equation}

The first eigenvalue is given by
\begin{equation}\label{eq:first-eigenvalue-variational}
 \lambda_s(\Omega)\coloneqq \inf_{v\in H_0^s(\Omega)\setminus\{0\}}\frac{\langle v,v\rangle_s}{\|v\|_{L^2(\Omega)}^2}.
\end{equation}
Whenever the infimum is attained, we call its non-negative,
$L^2$-normalized minimizer the \emph{ground state}. The ground state is
unique by \cite[Proposition~3.4]{FranzinaLicheri2022}.
Moreover, every non-negative weak eigenfunction is a ground state after
normalization: testing \cref{eq:weak-eigen} with $u$ gives
$\lambda\ge\lambda_s(\Omega)$, while the Picone inequality gives
$\lambda\le\lambda_s(\Omega)$; see
\cite[Proposition~4.2]{BrascoFranzina2014}.

Ground states need not exist on an arbitrary open set. For example,
$\R^n$ has no non-zero $L^2$ eigenfunction. If
$0<|\Omega|<\infty$, however, the fractional Poincar\'e inequality
and the compact embedding of $H_0^s(\Omega)$ into $L^2(\Omega)$
show that $\lambda_s(\Omega)>0$ and that the infimum in
\cref{eq:first-eigenvalue-variational} is attained; see
\cite[Eq.~(2.2) and Theorem~2.1]{FernandezBonderRitortoSalort2017} and
\cite[Proposition~3.2]{FranzinaLicheri2022}.

The question whether a ground state, when it exists, is superharmonic has
a long history. For $s=1/2$, Ba\~nuelos and Kulczycki proved
it on bounded Lipschitz open sets
\cite[Theorem~4.7]{BanuelosKulczycki2004}. Ba\~nuelos and DeBlassie
treated $s=1/m$, with $m>2$ an integer
\cite[Theorem~1.1]{BanuelosDeBlassie2015}. Kassmann and Silvestre later
gave a different proof for all reciprocal integers on bounded
$C^{1,1}$ open sets \cite[Theorem~1.1]{KassmannSilvestre2014}.
More recently, Abatangelo and Jarohs proved strict concavity on
intervals for every $s\in(1/2,1)$, as well as strict
superharmonicity in the unit ball when $2\le n\le11$
\cite[Theorems~1.1 and~1.2]{AbatangeloJarohs2024}.

Our main result shows that, whenever the ground state exists, it is
strictly superharmonic (with a quantitative lower bound) on an arbitrary
open set throughout the full range $s\in(0,1)$. This extends the
preceding results and thereby completes the resolution of a conjecture of
Ba\~nuelos, Kulczycki, and M\'endez-Hern\'andez
(see \cite[Conjecture~1.1]{BanuelosKulczyckiMendez2006}).

\begin{theorem}\label{thm:superharmonicity}
Let $\Omega\subset\R^n$ be open, let $s\in(0,1)$, and let
$(\lambda,u)$ be a weak eigenpair satisfying $u\ge0$. Then
\[
 \lambda=\lambda_s(\Omega)>0,\qquad u\in C^\infty(\Omega),\qquad u>0\quad\text{in }\Omega.
\]
The eigenspace corresponding to $\lambda_s(\Omega)$ is
one-dimensional, and
\begin{align}
 -\Delta u&>\lambda^{1/s}u>0 &&\text{in }\Omega,\label{eq:full-range-superharmonicity}\\
 -\Delta\log u&>\lambda^{1/s}+|\nabla\log u|^2 &&\text{in }\Omega.\notag
\end{align}
\end{theorem}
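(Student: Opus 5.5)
The plan is to work with the global operator $A=-\Delta$ on $\R^n$, extend $u$ by zero, and compare fractional powers $A^\alpha u$ for different $\alpha$ inside $\Omega$. The target inequality \cref{eq:full-range-superharmonicity} is then the case $\alpha=1$ of the family of inequalities
\[
 A^\alpha u\ \ge\ \lambda^{\alpha/s}u\quad\text{in }\Omega,\qquad \alpha\in(0,1],
\]
which holds with equality at $\alpha=s$. The second displayed inequality follows from the first, since $-\Delta\log u=-\Delta u/u+|\nabla\log u|^2$.

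\emph{Preliminaries.} I first settle the qualitative claims, all from the cited literature or standard.
\begin{itemize}
 \item[(a)] $\lambda=\lambda_s(\Omega)$ follows from testing with $u$ and from Picone, as recalled before the theorem.
 \item[(b)] $\lambda>0$: the Rayleigh quotient of a nonzero function is positive, and the infimum is attained by $u$.
 \item[(c)] $u\in C^\infty(\Omega)$ by interior bootstrap: $(-\Delta)^s u=\lambda u$ with $u\in L^2$, and local regularity for the fractional Laplacian.
 \item[(d)] $u>0$ in $\Omega$ by the strong maximum principle. Since $u\ge0$ and $u\not\equiv0$, at an interior zero $x_0$ the integral $(-\Delta)^s u(x_0)=-c_{n,s}\int u(y)|x_0-y|^{-n-2s}\,\mathrm dy$ is strictly negative, while the equation requires it to vanish.
 \item[(e)] Simplicity of the eigenspace follows from uniqueness of the ground state \cite[Proposition~3.4]{FranzinaLicheri2022} together with the usual argument that $|v|$ is a minimizer for any eigenfunction $v$.
\end{itemize}

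\emph{Key structural identity.} Set $g:=\lambda u-A^s u$ on $\R^n$. Then $g=0$ in $\Omega$, and for $x\notin\Omega$ we have $g(x)=c_{n,s}\int u(y)|x-y|^{-n-2s}\,\mathrm dy\ge0$. Hence $g\ge0$ is supported in $\R^n\setminus\Omega$. For $\beta\in(0,1)$ and $x\in\Omega$ the same off-diagonal formula gives $A^\beta g(x)=-c_{n,\beta}\int g(y)|x-y|^{-n-2\beta}\,\mathrm dy\le0$, with strict inequality as soon as $g\not\equiv0$. For $\beta=1$ we get $Ag=0$ in $\Omega$. Therefore
\[
 A^{\alpha+s}u=A^\alpha(\lambda u-g)\ \ge\ \lambda A^\alpha u\quad\text{in }\Omega,\qquad 0\le\alpha\le1-s.
\]
Iterating from $\alpha=0$ gives $A^{ks}u\ge\lambda^k u$ in $\Omega$ for every integer $k$ with $ks\le1$. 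When $1/s\in\mathbb N$ this already yields $Au\ge\lambda^{1/s}u$, which recovers the Kassmann--Silvestre range. Making each step rigorous requires justifying the semigroup law $A^\alpha A^s=A^{\alpha+s}$ on $u$. I would do this in the distributional sense, using that $u\in H^s$ is smooth inside $\Omega$ and has the known boundary behaviour.

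\emph{Main obstacle: non-reciprocal $s$.} For general $s$ one needs a base inequality $A^\alpha u\ge\lambda^{\alpha/s}u$ in $\Omega$ for some $\alpha\in(0,s)$, chosen so that adding multiples of $s$ lands exactly at $1$. I would first try an interpolation argument in $\alpha$. Fix $x\in\Omega$ and study the map $\alpha\mapsto A^\alpha u(x)$, using the spectral representation
\[
 A^\alpha u=\frac{\sin(\pi\alpha)}{\pi}\int_0^\infty t^{\alpha-1}A(t+A)^{-1}u\,\mathrm dt.
\]
Combining this with $A^\alpha g\le0$ in $\Omega$ and $A^\alpha u\le0$ outside $\Omega$, the aim is to show that
\[
 m(\alpha):=\inf_\Omega\frac{A^\alpha u}{u}
\]
satisfies $m(\alpha+\beta)\ge m(\alpha)m(\beta)$. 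With $m(s)=\lambda$ and $m(0)=1$, supermultiplicativity forces $m(\alpha)\ge\lambda^{\alpha/s}$ along the rationals and then by continuity. The delicate point is that $A^\alpha$ applied to a function that is nonnegative in $\Omega$ and nonpositive outside is not obviously signed in $\Omega$. If this fails, my fallback is to run the argument through the killed semigroup, using $u=e^{\lambda t}P^\Omega_t u$, together with subordination $e^{-tA^s}=\int e^{-rA}\,\eta_t(\mathrm dr)$, where convexity of $r\mapsto r^{1/s}$ supplies Jensen's inequality in place of the integer iteration. Finally, strictness of the inequality comes from $g\not\equiv0$: otherwise $u$ would be an $L^2$ eigenfunction on all of $\R^n$, which is impossible.
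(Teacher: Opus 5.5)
Your preliminaries (a)--(e), the reduction of the logarithmic inequality to \cref{eq:full-range-superharmonicity}, and the exterior-defect observation are sound. The paper proves $g\ge0$ as a measure statement in \cref{lem:defect-measure}, because the pointwise formula is unavailable on $\partial\Omega$. Your integer iteration, modulo the semigroup justification you mention, proves the theorem when $1/s\in\mathbb N$; it is exactly the argument recovered in \cref{rem:reciprocal-integers}.

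For general $s$, however, there is a genuine gap, and your proposed route cannot close it. The family $A^\alpha u\ge\lambda^{\alpha/s}u$ in $\Omega$ that you announce at the start is false for every $\alpha\in(0,s)$, so the base inequality you look for does not exist. To see this, pair $A^\alpha u$ with $u\ge0$, which vanishes outside $\Omega$. Jensen's inequality for the concave map $x\mapsto x^{\alpha/s}$, with respect to the probability measure $|\mathcal F(u)|^2\,\mathrm d\xi/\|\mathcal F(u)\|_{L^2}^2$, gives
\[
 \frac1{(2\pi)^n}\int_{\R^n}|\xi|^{2\alpha}|\mathcal F(u)(\xi)|^2\,\mathrm d\xi<\Bigl(\frac1{(2\pi)^n}\int_{\R^n}|\xi|^{2s}|\mathcal F(u)(\xi)|^2\,\mathrm d\xi\Bigr)^{\alpha/s}\|u\|_{L^2}^{2(1-\alpha/s)}=\lambda^{\alpha/s}\|u\|_{L^2}^2 .
\]
The inequality is strict because $\mathcal F(u)$ cannot concentrate on the null set $\{|\xi|^{2s}=\lambda\}$. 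The paper in fact proves $A^\alpha u<\lambda^{\alpha/s}u$ pointwise in $\Omega$ for all $\alpha\in(0,s)$; this is the case $0<\theta<1$ of \cref{thm:power}.

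The supermultiplicativity step is also logically insufficient. From $m(0)=1$, $m(s)=\lambda$ and $m(\alpha+\beta)\ge m(\alpha)m(\beta)$ you only get $m(ks)\ge\lambda^k$ and the \emph{upper} bounds $m(s/q)\le\lambda^{1/q}$. You never get a lower bound at $\alpha=1$ unless $1/s\in\mathbb N$. This is quite apart from the issue you already flagged, that $A^\beta$ of a function which is $\ge0$ in $\Omega$ and $\le0$ outside has no sign in $\Omega$. The subordination/Jensen fallback is not specified enough to assess: nothing in it turns convexity of $r\mapsto r^{1/s}$ into a pointwise bound for $-\Delta u$ in $\Omega$.

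The missing idea is to stop stepping through intermediate powers and apply your off-support sign to resolvent-type operators instead. Set $\mu=\lambda u-A^su\ge0$, supported in $\Omega^c$, and $G(r)=(r-\lambda^{1/s})/(r^s-\lambda)$; then $(A-\lambda^{1/s})u=-G(A)\mu$. The function $G$ is a complete Bernstein function: its holomorphic extension $z\mapsto s^{-1}\int_0^1((1-\tau)\lambda+\tau z^s)^{1/s-1}\,\mathrm d\tau$ maps the upper half-plane into itself, since $s(1/s-1)<1$. Hence, by \cref{prop:power-quotient}, $G(r)=a+br+\int_{(0,\infty)}\frac{r}{r+t}\,\sigma(\mathrm dt)$ with $a,b\ge0$ and $\sigma\ne0$. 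In $\Omega$ the terms $a\mu$ and $bA\mu$ vanish. Moreover $A(A+t)^{-1}\mu=\mu-t(A+t)^{-1}\mu=-t\,v_t$ there, where $v_t>0$ is the Bessel potential of $\mu$. It follows that $(A-\lambda^{1/s})u=\int t\,v_t\,\sigma(\mathrm dt)>0$ in $\Omega$.

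Your iteration is precisely the special case $s=1/m$. There $G(r)=\sum_{j=0}^{m-1}\lambda^jr^{(m-1-j)/m}$ is a finite positive combination of powers $r^\beta$ with $\beta\in[0,1)$. For general $s$, the positive weights must come from the Stieltjes/complete Bernstein structure of $G$, and no finite chain of fractional powers supplies them.
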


In one dimension, \cref{thm:superharmonicity} immediately yields the
following consequence.

\begin{corollary}\label{cor:one-dimensional}
Under the hypotheses of \cref{thm:superharmonicity}, let $n=1$.
Then, on every connected component of $\Omega$,
\[
 u''<-\lambda^{1/s}u<0,\qquad (\log u)''<-\lambda^{1/s}.
\]
Thus $u$ is strictly concave and $\log u$ is
$\lambda^{1/s}$-strongly concave.
\end{corollary}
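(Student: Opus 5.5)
The corollary is a direct specialization of \cref{thm:superharmonicity} to $n=1$, so the plan is simply to read off both inequalities in one dimension.

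First, apply \cref{thm:superharmonicity} to obtain $\lambda=\lambda_s(\Omega)>0$, $u\in C^\infty(\Omega)$, and $u>0$ in $\Omega$. When $n=1$ the Laplacian is $\Delta u=u''$. Then \cref{eq:full-range-superharmonicity} becomes $-u''>\lambda^{1/s}u>0$ pointwise in $\Omega$, which is exactly $u''<-\lambda^{1/s}u<0$.

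Second, since $u>0$ and $u$ is smooth, $\log u$ is smooth on $\Omega$. The second inequality of the theorem reads $-(\log u)''>\lambda^{1/s}+((\log u)')^2$. Dropping the nonnegative square gives $(\log u)''<-\lambda^{1/s}$. Both inequalities hold pointwise on all of $\Omega$, hence on every connected component.

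Finally, each connected component of an open subset of $\R$ is an open interval $I$. On $I$, a $C^2$ function with strictly negative second derivative is strictly concave. Likewise, $(\log u)''<-\lambda^{1/s}$ says that $\log u+\tfrac{\lambda^{1/s}}{2}x^2$ has negative second derivative, so it is concave on $I$. This is the definition of $\lambda^{1/s}$-strong concavity of $\log u$.

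There is no genuine obstacle. The only point needing care is restricting to connected components, because concavity is a statement about convex domains and $\Omega$ itself may be disconnected.
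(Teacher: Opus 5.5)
Your proposal is correct and is the same argument the paper uses: it calls the corollary an immediate consequence of \cref{thm:superharmonicity}, obtained by setting $\Delta u=u''$ and dropping the nonnegative term $((\log u)')^2$. Restricting to connected components, which are open intervals, is the right step for deducing strict concavity of $u$ and strong concavity of $\log u$.
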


In particular, after scaling and translation,
\cref{cor:one-dimensional} settles
\cite[Conjecture~1.1]{BanuelosKulczyckiMendez2006}. The same paper asks
whether fractional ground states are log-concave on every bounded
convex open set, in analogy with the theorem of Brascamp and Lieb for
the classical Laplacian \cite[Theorem~6.1]{BrascampLieb1976}. We answer
this question for intervals and for balls in every dimension\footnote{Here and below, inequalities between symmetric matrices are understood in
the sense of quadratic forms: $A\le B$ means that $B-A$ is positive
semidefinite, while $A<B$ means that $B-A$ is positive definite.}.

\begin{corollary}\label{cor:ball-log-concavity}
Let $n\ge1$, let $R>0$, and let
$s\in(0,1)$. If $u$ is the ground state in $B_R$, then
\begin{equation}\label{eq:ball-log-concavity}
 D^2\log u(x)\le D^2\log u(0)=\frac{\Delta u(0)}{n u(0)}\,\Id<-\frac{\lambda_s(B_R)^{1/s}}n\,\Id
\end{equation}
for every $x\in B_R$. The first inequality is strict away
from $0$.
\end{corollary}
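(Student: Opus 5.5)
The plan is to reduce \eqref{eq:ball-log-concavity} to a one-variable statement about the radial profile of $u$. The right-hand inequality will come directly from \cref{thm:superharmonicity}, and the left-hand one from a monotonicity property of that profile.

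\emph{Radial symmetry and the value at the origin.} For every rotation $Q$, the function $u\circ Q$ is again a non-negative, $L^2$-normalized weak eigenfunction in $B_R$, since $\langle\cdot,\cdot\rangle_s$ is rotation invariant. By \cref{thm:superharmonicity}, the eigenspace of $\lambda_s(B_R)$ is one-dimensional, so $u\circ Q=u$. Hence $u(x)=\phi(|x|)$ with $\phi$ smooth and positive on $[0,R)$. Set $\psi\coloneqq\log\phi$.

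At $x=0$ we have $\nabla u(0)=0$, and $D^2u(0)$ commutes with every rotation. It is therefore a multiple of $\Id$, namely $D^2u(0)=\frac{\Delta u(0)}{n}\Id$. Since $\nabla u(0)=0$, also $D^2\log u(0)=D^2u(0)/u(0)$, which gives the middle equality. By \eqref{eq:full-range-superharmonicity}, $\Delta u(0)<-\lambda^{1/s}u(0)$, and this yields the strict right-hand inequality.

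\emph{Reduction of the left-hand inequality.} For a radial function, $D^2\log u(x)$ has the eigenvalue $\psi''(r)$ in the direction $x/r$ and the eigenvalue $\psi'(r)/r$ (with multiplicity $n-1$) on $x^\perp$, where $r=|x|$. Set $g(r)\coloneqq\psi'(r)/r$. This extends smoothly to $r=0$ with $g(0)=\psi''(0)$, and $\psi''=g+rg'$. So it suffices to prove that
\[
 g'(r)<0\qquad\text{for }0<r<R.
\]
Then $\psi'(r)/r=g(r)<g(0)$, and $\psi''(r)=g(r)+rg'(r)<g(0)$. This gives $D^2\log u(x)<D^2\log u(0)$ for $x\neq0$, with strictness away from $0$.

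\emph{Main step: $g$ is strictly decreasing.} I expect this to be the real obstacle. The inequality $-\Delta\log u>\lambda^{1/s}+|\nabla\log u|^2$ from \cref{thm:superharmonicity} only controls the trace $\psi''+(n-1)g$, not $g'$ itself.

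My first attempt uses the dimension-shift intertwining $\frac1r\frac{\mathrm d}{\mathrm dr}$. This operator maps radial functions on $\R^n$ to radial functions on $\R^{n+2}$ and commutes with $(-\Delta)^s$ (visible on the Fourier side, since radial Fourier transforms in dimensions $n$ and $n+2$ are related by exactly this operator).

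\begin{itemize}
\item First, $w(y)\coloneqq-\phi'(|y|)/|y|$ solves $(-\Delta)^sw=\lambda w$ in $B_R\subset\R^{n+2}$, at least pointwise, with $w=0$ outside $B_R$.
\item Second, $w>0$ in $B_R$, because $\phi$ is radially decreasing by symmetric rearrangement and uniqueness.
\item Third, I would rerun the mechanism behind \cref{thm:superharmonicity} for $w$ in order to compare $w$ with $u$. Concretely, I want to show that $-\phi'/(r\phi)=-g$ is increasing, i.e.\ that the ratio $w/u$ is radially increasing. The idea is to apply the strict maximum principle for $(-\Delta)^s-\lambda$ to suitable combinations of $w$ and of $u$ viewed in $\R^{n+2}$.
\end{itemize}

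The delicate point in the third step is that $w$ behaves like $(R-r)^{s-1}$ near $\partial B_R$. So $w\notin H^s_0$, and the variational statements of \cref{thm:superharmonicity} do not apply directly. If this comparison cannot be made to work, the fallback is a more hands-on route. I would differentiate the log-concavity quantity along the radial ODE-like structure of the superharmonicity proof, and show that the set where $g'\ge0$ is empty by a first-touching argument.
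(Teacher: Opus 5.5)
There is a genuine gap: the main step, strict monotonicity of $\psi'(r)/r$, is left open, and neither route you sketch supplies what it needs. Your preliminary steps are fine. That includes radial symmetry, the identity $D^2\log u(0)=\frac{\Delta u(0)}{nu(0)}\Id$, and the right-hand inequality from \cref{eq:full-range-superharmonicity}. The reduction to showing that $\psi'(r)/r$ is strictly decreasing is also fine, and is slightly stronger than the two bounds the paper proves.

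Write $q\coloneqq-\phi'/\phi$, $Q\coloneqq q/r=-\psi'/r$ and $g\coloneqq(-\Delta-\lambda^{1/s})u$. The radial form of $-\Delta u=\lambda^{1/s}u+g$ is
\[
 rQ'=r^2Q^2-nQ+\lambda^{1/s}+h,\qquad h\coloneqq\frac gu .
\]
This is not an ODE. Everything hinges on the nonlocal term $h$, and \cref{thm:superharmonicity} only tells you that $h>0$.

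This is why your first-touching fallback has nothing to work with. Set $F\coloneqq rQ'$. At a first zero $r_0$ of $F$, differentiating the equation gives $F'(r_0)=2r_0Q(r_0)^2+h'(r_0)$, and the sign of $h'$ is unknown.

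The dimension-walk route does not get around this either. It is true that $w=-\phi'/r$ satisfies $(-\Delta)^sw=\lambda w$ in $B_R\subset\R^{n+2}$. But your target is monotonicity of the ratio $w/u$, a nonlinear statement about two functions that naturally live in different dimensions. You do not identify any linear combination to which a maximum principle for $(-\Delta)^s-\lambda$ could apply, and none is apparent. In addition $w\notin H^s_0$, as you note yourself.

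The missing idea is the case $k=1$ of \cref{eq:iterated-subharmonicity} with $\theta=1/s$, which says $\Delta g>0$ in $B_R$. Integrating the radial Laplacian gives $r^{n-1}g'(r)=\int_0^r t^{n-1}\Delta g(t)\,\mathrm dt>0$. The same computation with $-\Delta u>0$ gives $\phi'<0$. Since $g>0$, the quotient $h=g/u$ is then strictly increasing.

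With this, your own reduction closes at once. Differentiating the equation above gives
\[
 \bigl(r^n\phi^2F\bigr)'=r^n\phi^2\bigl(2rQ^2+h'\bigr)>0,\qquad r^n\phi^2F=O(r^{n+2})\ \text{as }r\downarrow0 .
\]
Hence $Q'>0$ on $(0,R)$, that is, $\psi'/r$ is strictly decreasing.

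The paper uses exactly this ingredient, $h'>0$. It then proves the two weaker bounds $q(r)/r>q'(0)$ and $q'(r)>q'(0)$ separately, using the integrating factors $r^n$ and $r^{n-1}\phi^2$.
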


These statements follow from a stronger comparison between different
powers of the Laplacian. Throughout the paper, $(-\Delta)^\alpha$
denotes the whole-space Fourier multiplier acting on the zero extension
of $u$.

\begin{theorem}\label{thm:power}
Let $\Omega\subset\R^n$ be open, let $s\in(0,1)$, and let
$(\lambda,u)$ be a weak eigenpair satisfying $u\ge0$. Then, for
every $k\in\mathbb{N}_0$, the following inequalities hold pointwise in
$\Omega$:
\begin{align}
 \Delta^k\bigl[(\lambda^\theta-(-\Delta)^{s\theta})u\bigr]&>0 &&\text{if } \ 0<\theta<1,\label{eq:iterated-low-comparison}\\
 ((-\Delta)^s-\lambda)u&=0,\notag\\
 \Delta^k\bigl[((-\Delta)^{s\theta}-\lambda^\theta)u\bigr]&>0 &&\text{if } \ 1<\theta\le1+\frac1s.\label{eq:iterated-subharmonicity}
\end{align}
\end{theorem}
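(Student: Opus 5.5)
The plan is to rewrite each inequality as the action of a \emph{positive} (respectively \emph{negative}) off-diagonal kernel on a nonnegative ``exterior charge'', and then check the sign of $\Delta^k$ of that kernel.

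\emph{The exterior charge.} Let $A\coloneqq(-\Delta)^s$ act on the zero extension of $u$. Define the distribution
\[
\mu\coloneqq\lambda u-(-\Delta)^s u .
\]
By \cref{eq:weak-eigen}, $\mu$ vanishes when tested against $C_c^\infty(\Omega)$, so $\supp\mu\subset\R^n\setminus\Omega$. On the interior of the complement one computes directly
\[
\mu(x)=c_{n,s}\int_\Omega\frac{u(y)}{|x-y|^{n+2s}}\,\mathrm dy\ge0 .
\]
The first step is to show that $\mu$ is a nonnegative Radon measure on all of $\R^n$, including near $\partial\Omega$ for an arbitrary open set. I would try to get this by testing the weak formulation with $\varphi=\min(\psi,\,u/\varepsilon)$-type truncations, $\psi\ge0$, or by exhausting $\Omega$ with regular sets. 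The second point is $\mu\not\equiv0$: otherwise $u$ would be a nonzero $L^2(\R^n)$ eigenfunction of $(-\Delta)^s$ on $\R^n$, which is impossible. Everything is then reduced to statements of the form
\[
(\lambda^\theta-A^\theta)u=\phi(A)\mu,\qquad \phi(t)=\frac{\lambda^\theta-t^\theta}{\lambda-t},
\]
and for $\theta>1$, $(A^\theta-\lambda^\theta)u=-\chi(A)\mu$ with $\chi(t)=\frac{t^\theta-\lambda^\theta}{t-\lambda}$. Both identities hold as Fourier multipliers because $(A-\lambda)u=-\mu$ and the quotients are entire in $t$ near $t=\lambda$. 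Pointwise statements in $\Omega$ are then about the kernels evaluated at $x\notin\supp\mu$. Smoothness of $u$ in $\Omega$ comes out of the same representation, and the case $\theta=1$ is the equation itself.

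\emph{Case $0<\theta<1$.} From $t^\theta=\frac{\sin\pi\theta}{\pi}\int_0^\infty r^{\theta-1}\frac{t}{r+t}\,\mathrm dr$ I get
\[
\phi(t)=\frac{\sin\pi\theta}{\pi}\int_0^\infty\frac{r^\theta}{(r+t)(r+\lambda)}\,\mathrm dr ,
\]
so $\phi$ is a Stieltjes function. Then $\psi(\tau)\coloneqq\phi(\tau^s)$ is again Stieltjes, by composition with the complete Bernstein function $\tau^s$. Hence $\psi(\tau)=\int_0^\infty e^{-\sigma\tau}m(\sigma)\,\mathrm d\sigma$ with $m$ completely monotone and nonzero. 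The kernel of $\psi(-\Delta)$ is $\int_0^\infty G_\sigma(x)m(\sigma)\,\mathrm d\sigma$, where $G_\sigma$ is the heat kernel. Using $\Delta G_\sigma=\partial_\sigma G_\sigma$ and integrating by parts $k$ times, I get for $x\ne0$
\[
\Delta^k K(x)=\int_0^\infty G_\sigma(x)\,(-1)^k m^{(k)}(\sigma)\,\mathrm d\sigma>0 .
\]
The boundary terms vanish at $\sigma=0$ because $G_\sigma(x)$ vanishes to infinite order there when $x\ne0$; at $\sigma=\infty$ they vanish by decay. Integrating against $\mu\ge0$, $\mu\ne0$, at points $x\in\Omega$ gives \cref{eq:iterated-low-comparison}. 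Some care is needed to justify differentiating under the integral near $\partial\Omega$, using the local finiteness of $\mu$.

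\emph{Case $1<\theta\le1+1/s$; this is the main obstacle.} Here I want $\chi(\tau^s)$ to be a complete Bernstein function of $\tau$. Its Lévy representation is $b\tau+\int(1-\frac{r}{\tau+r})\,\nu(\mathrm dr)$, which gives an off-diagonal kernel $-\int(\text{Stieltjes-type kernels})$. By the computation above, that kernel has $\Delta^k$ strictly negative off the diagonal, hence $\Delta^k[-\chi(A)\mu]>0$ in $\Omega$. To verify the complete Bernstein property, I will use the Nevanlinna characterization: it suffices that $\chi(\tau^s)$ maps the upper half-plane into its closure. For $\operatorname{Im}\tau>0$ put $t=\tau^s$, so $\arg t\in(0,s\pi)$. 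I would then prove the argument bound
\[
0\le\arg\frac{t^\theta-\lambda^\theta}{t-\lambda}\le(\theta-1)\arg t .
\]
This can be attempted by writing the quotient as an average $\theta\int_0^1(\lambda+\rho(t-\lambda))^{\theta-1}\,\mathrm d\rho$ and noting that every point $\lambda+\rho(t-\lambda)$ has argument in $[0,\arg t]$. It gives $\arg\chi(\tau^s)\le(\theta-1)s\pi\le\pi$ precisely when $\theta\le1+1/s$, which explains the threshold. Strictness again follows from $\mu\ne0$, and the case $k=0$ needs the analogous negativity of the kernel itself.
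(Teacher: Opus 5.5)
Your plan follows the paper's proof in all essentials. It uses the same exterior defect $\mu=\lambda u-(-\Delta)^s u$: the paper tests with $\min\{\eta,Mu\}$ exactly as you suggest, and gets $\mu\neq0$ from the Fourier transform. It treats the same quotient $\chi(\tau^s)=\Psi(\tau)$, shown to be Stieltjes, respectively complete Bernstein, via the same averaging formula $\theta\int_0^1(\lambda+\rho(t-\lambda))^{\theta-1}\,\mathrm d\rho$. And it relies on the same sign mechanism off $\supp\mu$. The only variation is for $0<\theta<1$: you go through heat-kernel subordination and integrate by parts in $\sigma$. The paper instead uses the Stieltjes representation $\Psi(r)=\int\sigma(\mathrm dt)/(r+t)$ directly, so that $v_t=(-\Delta+t)^{-1}\mu$ satisfies $\Delta^k v_t=t^kv_t$ in $\Omega$ and no boundary terms arise. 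Both routes work.

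Two points need fixing.

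First, for $1<\theta\le1+1/s$, strictness does not follow from $\mu\neq0$ alone. You also need the representing measure $\nu\neq0$; otherwise $\chi(\tau^s)$ is affine and $-\chi(A)\mu$ vanishes identically in $\Omega$. This does hold: $\chi(\tau^s)\sim\tau^{s(\theta-1)}$ with $0<s(\theta-1)<1$, and at the endpoint $\chi(\tau^s)-\tau\sim\lambda\tau^{1-s}$. This is how the paper checks it. Your representation should also contain the constant $a=\lambda^{\theta-1}>0$; it is harmless because $a\mu=0$ in $\Omega$.

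Second, the care needed in evaluating $\psi(-\Delta)\mu$ pointwise is at infinity, not near $\partial\Omega$. Locally around any $x\in\Omega$, $\supp\mu$ stays at positive distance. The problem is that the kernel of $\psi(-\Delta)$ decays only polynomially, since its symbol is not smooth at $\tau=0$, while a priori $\mu$ is just a tempered measure in $H^{-s}$. So you must justify both that $\int K(x-y)\,\mu(\mathrm dy)$ converges and that it represents the distribution $\psi(-\Delta)\mu$ on $\Omega$.

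The paper sidesteps this by never forming the kernel of $\Psi(-\Delta)$. For $\eta\in C_c^\infty(\Omega)$, it writes $\Psi(-\Delta)\eta$ as an $H^s$-valued integral of resolvents and pairs it with $\mu\in H^{-s}$. It then uses only the exponentially decaying Bessel kernels for each $v_t$, and keeps just the lower bound coming from a compact $J\Subset(0,\infty)$.

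Alternatively, you can close your version by noting that a non-negative measure in $H^{-s}$ integrates every non-negative continuous $H^s$ function. In particular it integrates $(1+|y|)^{-\alpha}$ for every $\alpha>n/2$, which dominates your kernels and their derivatives at infinity.
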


Taking $\theta=1/s$ and $k=0$ in
\cref{eq:iterated-subharmonicity}, we obtain
\cref{eq:full-range-superharmonicity}. The chain rule then gives
\[
 -\Delta\log u=\frac{-\Delta u}{u}+\frac{|\nabla u|^2}{u^2}>\lambda^{1/s}+|\nabla\log u|^2.
\]

\subsection*{Idea of the proof}
The starting point is to regard the zero extension of $u$ as a
function on the whole space. We will prove that
\[
 \mu\coloneqq \lambda u-(-\Delta)^s u
\]
is a non-zero, non-negative measure supported in $\Omega^c$. The
eigenvalue equation gives $\mu=0$ in $\Omega$. On the other hand,
the sign outside $\Omega$ can already be seen formally from the
pointwise formula. If $x\in\R^n\setminus\overline{\Omega}$, then
$u(x)=0$ and
\[
 (-\Delta)^s u(x)=-c_{n,s}\int_\Omega\frac{u(y)}{|x-y|^{n+2s}}\,\mathrm dy<0.
\]
Thus the zero extension solves a whole-space equation with a positive
defect concentrated outside the domain. The rigorous measure statement,
including any contribution on $\partial\Omega$, is established in
\cref{lem:defect-measure}.

To see how this exterior defect produces an inequality inside
$\Omega$, consider first the exponent $\theta=1/s$. Formally,
\[
 (\lambda^{1/s}-(-\Delta))u=G(-\Delta)\mu,\qquad G(r)\coloneqq \frac{\lambda^{1/s}-r}{\lambda-r^s}=\frac{r-\lambda^{1/s}}{r^s-\lambda},
\]
where the quotient is extended continuously at
$r=\lambda^{1/s}$. Hence the desired superharmonicity estimate amounts
to showing that $G(-\Delta)\mu<0$ in $\Omega$, although
$\mu\ge0$ and $\supp\mu\subset\Omega^c$. This is an off-support
maximum-principle property of $G(-\Delta)$. Luckily, operators satisfying a maximum principle have been widely and extensively studied (cf.~\cite{Courrege1965}, \cite{Alibaud2020}, and 
\cite[Section~2.1.3]{FernandezRealRosOton2024}). 

The sign becomes transparent after resolving $G$ into local
resolvents. By \cref{prop:power-quotient}, $G$ is a complete Bernstein
function and has a representation
\[
 G(r)=a+br+\int_{(0,\infty)}\frac{r}{r+t}\,\sigma(\mathrm dt),
\]
where $a,b\ge0$ and $\sigma$ is a non-zero positive measure.
For $t>0$, let
\[
 v_t\coloneqq (-\Delta+t)^{-1}\mu.
\]
The Bessel kernel is strictly positive, and therefore $v_t>0$ in
$\Omega$. Since $\mu=0$ there,
\[
 (-\Delta+t)v_t=0\quad\text{in }\Omega,\qquad \Delta^k v_t=t^k v_t>0\quad\text{for every }k\in\mathbb{N}_0.
\]
Moreover,
\[
 (-\Delta)(-\Delta+t)^{-1}\mu=\mu-tv_t=-tv_t\quad\text{in }\Omega.
\]
The local terms $a\mu$ and $b(-\Delta)\mu$ also vanish in
$\Omega$. Consequently, at the formal level,
\[
 (\lambda^{1/s}-(-\Delta))u=G(-\Delta)\mu=-\int_{(0,\infty)}t v_t\,\sigma(\mathrm dt)<0\quad\text{in }\Omega,
\]
which is equivalent to \cref{eq:full-range-superharmonicity}.

The same mechanism gives the full range in \cref{thm:power}. Indeed,
$G$ is the case $\theta=1/s$ of the quotient
\[
 \Psi(r)\coloneqq \frac{r^{s\theta}-\lambda^\theta}{r^s-\lambda}.
\]
For $0<\theta<1$, this quotient is a positive combination of the
resolvents $(r+t)^{-1}$. For
$1<\theta\le1+1/s$, it is a positive affine function plus a positive
combination of $r(r+t)^{-1}$. Denote the corresponding non-zero positive
measure by $\sigma_\theta$. Applying these representations to the exterior
measure $\mu$, the local terms vanish in $\Omega$, and one formally
obtains
\[
 (\lambda^\theta-(-\Delta)^{s\theta})u=\int_{(0,\infty)}v_t\,\sigma_\theta(\mathrm dt)>0\qquad(0<\theta<1),
\]
and
\[
 ((-\Delta)^{s\theta}-\lambda^\theta)u=\int_{(0,\infty)}t v_t\,\sigma_\theta(\mathrm dt)>0\qquad\left(1<\theta\le1+\frac1s\right).
\]
The case $\theta=1$ is the eigenvalue equation. Applying $\Delta^k$
and using $\Delta^k v_t=t^k v_t>0$ gives all the inequalities in
\cref{thm:power}. The proof below makes these formal computations
rigorous by moving $\Psi(-\Delta)$ onto test functions supported in
$\Omega$.

Finally, in a ball, we apply the cases $k=0$ and $k=1$ of
\cref{eq:iterated-subharmonicity} to
\[
 g\coloneqq (-\Delta-\lambda^{1/s})u.
\]
They give $g>0$ and $\Delta g>0$. Radial symmetry then reduces the
matrix estimate to two one-dimensional inequalities for the radial and
tangential eigenvalues of $D^2\log u$.

\section{The exterior defect}\label{sec:defect}

We start with the few facts about fractional powers that will be used
below. We use the Fourier convention
\[
 \mathcal{F}(v)(\xi)\coloneqq \int_{\R^n}v(x)e^{-i\xi\cdot x}\,\mathrm dx.
\]
For $\alpha\in\R$,
\[
 H^\alpha(\R^n)\coloneqq \left\{v\in\mathcal S'(\R^n): \ (1+|\xi|^2)^{\alpha/2}\mathcal{F}(v)\in L^2(\R^n)\right\},
\]
with norm
\[
 \|v\|_{H^\alpha(\R^n)}^2\coloneqq \frac1{(2\pi)^n}\int_{\R^n}(1+|\xi|^2)^\alpha|\mathcal{F}(v)(\xi)|^2\,\mathrm d\xi.
\]
For $\alpha\ge0$, the whole-space fractional Laplacian is given by
\[
 \mathcal{F}\bigl((-\Delta)^\alpha v\bigr)(\xi)=|\xi|^{2\alpha}\mathcal{F}(v)(\xi).
\]
If $v\in L^2(\R^n)$, this identity is understood in the
distributional sense. In particular,
\[
 \langle v,w\rangle_s=\frac1{(2\pi)^n}\int_{\R^n}|\xi|^{2s}\mathcal{F}(v)(\xi)\overline{\mathcal{F}(w)(\xi)}\,\mathrm d\xi.
\]

Although \cref{eq:weak-eigen} holds only in $\Omega$, the zero
extension of $u$ carries a signed source on the whole space. The next
lemma shows that this source has the right sign.

\begin{lemma}\label{lem:defect-measure}
Let $\Omega\subset\R^n$ be open, let $s\in(0,1)$, and let
$(\lambda,u)$ be a weak eigenpair satisfying $u\ge0$. Then
\[
 \mu\coloneqq \lambda u-(-\Delta)^s u
\]
is a non-zero, non-negative Radon measure in $H^{-s}(\R^n)$, and
\[
 \supp\mu\subset\Omega^c.
\]
\end{lemma}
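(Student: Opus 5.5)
We regard $\mu$ as a tempered distribution and first check that it lies in $H^{-s}(\R^n)$. Since $u\in H_0^s(\Omega)\subset H^s(\R^n)$, we have $\mathcal F((-\Delta)^s u)=|\xi|^{2s}\mathcal F(u)$ and $|\xi|^{2s}(1+|\xi|^2)^{-s/2}|\mathcal F(u)|\le (1+|\xi|^2)^{s/2}|\mathcal F(u)|\in L^2$. Hence $(-\Delta)^s u\in H^{-s}$, and trivially $\lambda u\in L^2\subset H^{-s}$. By Plancherel, for $\varphi\in H^s(\R^n)$,
\[
 \mu(\varphi)=\lambda\int u\varphi\,\mathrm dx-\langle u,\varphi\rangle_s .
\]
For $\varphi\in C_c^\infty(\Omega)\subset H_0^s(\Omega)$, \cref{eq:weak-eigen} gives $\mu(\varphi)=0$. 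Therefore $\supp\mu\subset\Omega^c$.

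Next we prove non-negativity, which is the main step. Take $\varphi\in C_c^\infty(\R^n)$ with $\varphi\ge0$; we want $\mu(\varphi)\ge0$. The natural idea is to split $\varphi$ into a part in $H_0^s(\Omega)$, where $\mu$ vanishes, and a remainder whose pairing has a sign.

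\emph{Pointwise heuristic.} Outside $\overline\Omega$, $\lambda u=0$ and $(-\Delta)^s u=-c_{n,s}\int u(y)|x-y|^{-n-2s}\,\mathrm dy\le0$, so $\mu\ge0$ there. The only delicate part is $\partial\Omega$, where we know nothing about regularity.

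\emph{Truncation.} For $\varepsilon>0$ consider $w_\varepsilon\coloneqq \min(\varphi,u/\varepsilon)$. This lies in $H_0^s(\Omega)$, because $u/\varepsilon\in H_0^s(\Omega)$, $\varphi\ge0$, and the minimum of an $H^s$ function with a nonnegative element of $H_0^s(\Omega)$ lies in $H_0^s(\Omega)$. This last fact is a standard lattice property of $H_0^s$ on arbitrary open sets, which we would verify via approximation by $\min(\varphi,\psi_k)$ with $\psi_k\in C_c^\infty(\Omega)$. Then $\mu(\varphi)=\mu(\varphi-w_\varepsilon)=\mu\bigl((\varphi-u/\varepsilon)^+\bigr)$. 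Set $g\coloneqq(\varphi-u/\varepsilon)^+\ge0$. Then
\[
 \mu(g)=\lambda\int ug-\langle u,g\rangle_s .
\]
On the set $\{g>0\}$ we have $u<\varepsilon\varphi$, so $\lambda\int ug\le\lambda\varepsilon\int\varphi^2$. For the form we write
\[
 -\langle u,g\rangle_s=-\langle u-\varepsilon\varphi,g\rangle_s-\varepsilon\langle\varphi,g\rangle_s .
\]
Here $u-\varepsilon\varphi=-\varepsilon g+\varepsilon(\varphi-u/\varepsilon)^-$. Hence
\[
 -\langle u-\varepsilon\varphi,g\rangle_s=\varepsilon\langle g,g\rangle_s-\varepsilon\langle(\cdot)^-,(\cdot)^+\rangle_s\ge\varepsilon\langle g,g\rangle_s\ge 0,
\]
since $\langle f^-,f^+\rangle_s\le0$. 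Dividing the sign bookkeeping by $\varepsilon$ is awkward, so the cleaner route is to avoid rescaling: replace $u/\varepsilon$ by $Mu$ with $M\to\infty$. Then
\[
 \mu(\varphi)=\lambda\int u(\varphi-Mu)^+-\langle u,(\varphi-Mu)^+\rangle_s .
\]
The nonlocal cross term satisfies $-\langle u,f^+\rangle_s\ge -\tfrac1M\langle\varphi,f^+\rangle_s$ with $f=\varphi-Mu$, by the previous inequality. As $M\to\infty$, $(\varphi-Mu)^+\to\varphi\mathbf 1_{\{u=0\}}$ with bounded $H^s$ norm. So $\mu(\varphi)$ is, up to $o(1)$ terms, $\ge \lambda\int u(\varphi-Mu)^+ - \tfrac1M\langle\varphi,(\varphi-Mu)^+\rangle_s\to0$. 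This gives $\mu(\varphi)\ge0$. The main obstacle is making these limits and the membership $\min(\varphi,Mu)\in H_0^s(\Omega)$ rigorous on an arbitrary open set. If the $H^s$ bound on the truncations fails, we would fall back on mollifying $\mu$ and using that $u$ is a weak supersolution of $(-\Delta)^s u\le\lambda u$ tested against nonnegative functions.

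\emph{Radon measure and non-triviality.} A non-negative distribution is a Radon measure. It remains to show $\mu\ne0$. If $\mu=0$, then $(-\Delta)^s u=\lambda u$ on all of $\R^n$ with $u\in L^2$, so $(|\xi|^{2s}-\lambda)\mathcal F(u)=0$. Then $\mathcal F(u)$ is supported on the sphere $|\xi|=\lambda^{1/(2s)}$, a null set, which forces $u=0$, a contradiction. This also covers $\lambda\le0$.
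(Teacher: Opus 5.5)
Your overall strategy matches the paper's: the truncation $\min(\varphi,Mu)\in H_0^s(\Omega)$, the reduction $\mu(\varphi)=\lambda\int u f^+\,\mathrm dx-\langle u,f^+\rangle_s$ with $f=\varphi-Mu$, the inequality $\langle f,f^+\rangle_s\ge\langle f^+,f^+\rangle_s$, and the limit $M\to\infty$. Your $H^{-s}$, support, and non-triviality arguments also agree with the paper's.

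\textbf{The gap.} The step that fails as written is the final limit. You justify $\frac1M\langle\varphi,f^+\rangle_s\to0$ by claiming that $(\varphi-Mu)^+$ is bounded in $H^s$ uniformly in $M$, and this is false in general. Such a bound, together with the $L^2$ convergence $(\varphi-Mu)^+\to\varphi\mathbf 1_{\{u=0\}}$, would force $\varphi\mathbf 1_{\{u=0\}}\in H^s(\R^n)$ by weak compactness. Take $\Omega=B_1$, where $u>0$ in $B_1$, so $\{u=0\}=\R^n\setminus B_1$ a.e. If moreover $s\ge1/2$ and $\varphi>0$ on $\partial B_1$, then $\varphi\mathbf 1_{\{u=0\}}$ jumps across a smooth hypersurface and is therefore not in $H^{1/2}$.

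\textbf{The fix.} You actually had the needed term before switching from $\varepsilon$ to $M$: you discarded $\varepsilon\langle g,g\rangle_s$, which in the $M$ notation is $\frac1M\langle f^+,f^+\rangle_s$. Keep it and complete the square, as the paper does:
\[
 \langle u,f^+\rangle_s\le\frac{\langle\varphi,f^+\rangle_s-\langle f^+,f^+\rangle_s}{M}=\frac{\frac14\langle\varphi,\varphi\rangle_s-\langle f^+-\varphi/2,f^+-\varphi/2\rangle_s}{M}\le\frac{\langle\varphi,\varphi\rangle_s}{4M}.
\]
This gives $\mu(\varphi)\ge\lambda\int uf^+\,\mathrm dx-\langle\varphi,\varphi\rangle_s/(4M)$ with no bound on $f^+$ required. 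The first term is non-negative because $\lambda=\langle u,u\rangle_s/\|u\|_{L^2}^2>0$; alternatively, it tends to $0$ by dominated convergence.

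A second repair also works. Since $\varphi\in C_c^\infty(\R^n)$, you have $\langle\varphi,f^+\rangle_s=\int(-\Delta)^s\varphi\,f^+\,\mathrm dx$, and $0\le f^+\le\varphi$ then gives $|\langle\varphi,f^+\rangle_s|\le\|(-\Delta)^s\varphi\|_{L^2}\|\varphi\|_{L^2}$ uniformly in $M$.

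With either repair, the mollification fallback you mention is unnecessary.
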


\begin{proof}
Testing \cref{eq:weak-eigen} with $u$, we obtain
\[
 \lambda\|u\|_{L^2(\Omega)}^2=\langle u,u\rangle_s>0,
\]
so $\lambda>0$. The continuity of
$(-\Delta)^s:H^s(\R^n)\rightarrow H^{-s}(\R^n)$ now shows that
$\mu\in H^{-s}(\R^n)$.

We prove now that $\mu\ge0$. Let
$0\le\eta\in C_c^\infty(\R^n)$, let $M>0$, and set
\[
 q\coloneqq (\eta-Mu)^+.
\]
By approximation, we know
\[
 \eta-q=\min\{\eta,Mu\}\in H_0^s(\Omega).
\]
We may therefore use $\eta-q$ as a test function in
\cref{eq:weak-eigen}. This gives
\begin{equation}\label{eq:defect-test}
 \langle\mu,\eta\rangle=\lambda\int_{\R^n}u q\,\mathrm dx-\langle u,q\rangle_s.
\end{equation}

For every $a,b\in\R$,
\[
 (a-b)(a^+-b^+)\ge|a^+-b^+|^2.
\]
Applying this inequality to
$a=\eta(x)-Mu(x)$ and $b=\eta(y)-Mu(y)$, we find
\[
 \langle\eta-Mu,q\rangle_s\ge\langle q,q\rangle_s.
\]
Consequently,
\[
 \langle u,q\rangle_s\le\frac{\langle\eta,q\rangle_s-\langle q,q\rangle_s}{M}=\frac{\frac14\langle\eta,\eta\rangle_s-\langle q-\eta/2,q-\eta/2\rangle_s}{M}\le\frac{\langle\eta,\eta\rangle_s}{4M}.
\]
Since $u,q\ge0$, \cref{eq:defect-test} yields
\[
 \langle\mu,\eta\rangle\ge-\frac{\langle\eta,\eta\rangle_s}{4M}.
\]
Letting $M\uparrow\infty$, we deduce that $\mu$ is a non-negative
distribution. Hence it is a non-negative Radon measure; see
\cite[Theorem~2.1.7]{HormanderI}.

It remains to show the support and non-triviality statements.
For every $\eta\in C_c^\infty(\Omega)$, the weak equation gives
$\langle\mu,\eta\rangle=0$. Thus $\supp\mu\subset\Omega^c$.
Moreover,
\[
 \mathcal{F}(\mu)(\xi)=(\lambda-|\xi|^{2s})\mathcal{F}(u)(\xi).
\]
The Fourier transform of $u$ is non-zero on a set of positive measure,
whereas $\{|\xi|^{2s}=\lambda\}$ has measure zero. Hence
$\mathcal{F}(\mu)\ne0$, and therefore $\mu\ne0$.
\end{proof}

\section{A resolvent formula for the quotient}\label{sec:quotient}

We now study the quotient that relates the two powers in
\cref{thm:power}.

\begin{proposition}\label{prop:power-quotient}
Let $s\in(0,1)$, let $\lambda>0$, and let
$0<\theta\le1+1/s$. For $r\ge0$, define
\begin{equation}\label{eq:Psi-definition}
 \Psi(r)\coloneqq 
 \begin{cases}
  \dfrac{r^{s\theta}-\lambda^\theta}{r^s-\lambda},
  &r\ne\lambda^{1/s},\\[5pt]
  \theta\lambda^{\theta-1},
  &r=\lambda^{1/s}.
 \end{cases}
\end{equation}
If $0<\theta<1$, there is a unique non-zero positive measure
$\sigma$ on $(0,\infty)$ such that
\begin{equation}\label{eq:Psi-low-Stieltjes}
 \Psi(r)=\int_{(0,\infty)}\frac{\sigma(\mathrm dt)}{r+t},\qquad \int_{(0,\infty)}\frac{\sigma(\mathrm dt)}t=\lambda^{\theta-1}.
\end{equation}

If $1\le\theta\le1+1/s$, there are unique $a,b\ge0$ and a
unique positive measure $\sigma$ on $(0,\infty)$ such that
\begin{equation}\label{eq:Psi-CBF-representation}
 \Psi(r)=a+br+\int_{(0,\infty)}\frac{r}{r+t}\,\sigma(\mathrm dt),
\end{equation}
where
\begin{equation}\label{eq:Psi-CBF-coefficients}
 a=\lambda^{\theta-1},
 \qquad
 b=
 \begin{cases}
  0,&s(\theta-1)<1,\\
  1,&s(\theta-1)=1,
 \end{cases}
 \qquad
 \int_{(0,\infty)}\frac{\sigma(\mathrm dt)}{1+t}<\infty.
\end{equation}
The measure $\sigma$ is zero when $\theta=1$ and non-zero when
$\theta>1$.
\end{proposition}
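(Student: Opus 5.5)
The plan is to prove \cref{eq:Psi-low-Stieltjes} and \cref{eq:Psi-CBF-representation} by the Nevanlinna--Pick characterisation of Stieltjes and complete Bernstein functions. A function $f$ holomorphic on $\mathbb C\setminus(-\infty,0]$, non-negative on $(0,\infty)$, is a Stieltjes function iff $\operatorname{Im} f(z)\le0$ for $\operatorname{Im} z>0$. It is a complete Bernstein function iff $\operatorname{Im} f(z)\ge0$ there. In both cases the measure is recovered from the boundary values on the cut:
\[
 \sigma(\mathrm dt)=\frac{1}{\pi}\bigl|\operatorname{Im}\Psi(te^{i\pi})\bigr|\,\mathrm dt
\]
(multiplied by $1/t$ in the complete Bernstein case), together with a possible atom at $t=0$ that we will exclude. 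Uniqueness is part of the representation theorem.

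First, set $w=z^s$ (principal branch). For $z\in\mathbb C\setminus(-\infty,0]$ we have $|\arg w|<s\pi<\pi$. Then $\Psi(z)=(w^\theta-\lambda^\theta)/(w-\lambda)$, where $w^\theta$ is also taken with the principal branch. The only zero of $w-\lambda$ in this sector is $w=\lambda$, i.e.\ $z=\lambda^{1/s}$, and there the singularity is removable with value $\theta\lambda^{\theta-1}$. So $\Psi$ is holomorphic on the slit plane and positive on $(0,\infty)$.

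For $0<\theta<1$ there is a shortcut. The map $w\mapsto (w^\theta-\lambda^\theta)/(w-\lambda)$ is Stieltjes, since it is a difference quotient of the Stieltjes representation of $w^\theta$. The map $z\mapsto z^s$ is complete Bernstein, and a Stieltjes function composed with a complete Bernstein function is Stieltjes. This gives \cref{eq:Psi-low-Stieltjes}.

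The normalisations come from the behaviour at the endpoints. At $r=0$ we have $\Psi(0)=\lambda^{\theta-1}$. In the Stieltjes case this gives $\int\sigma(\mathrm dt)/t=\lambda^{\theta-1}$, with no atom at $0$ since $\Psi(0)<\infty$. In the complete Bernstein case it gives $a=\Psi(0^+)=\lambda^{\theta-1}$. At infinity, $\Psi(r)\sim r^{s(\theta-1)}$, so $b=\lim_{r\to\infty}\Psi(r)/r$ equals $0$ if $s(\theta-1)<1$ and $1$ if $s(\theta-1)=1$. Finally, $\theta=1$ gives $\Psi\equiv1$, so $\sigma=0$. For $\theta>1$, $\Psi$ is non-affine (for $s(\theta-1)<1$ it is unbounded but sublinear), hence $\sigma\ne0$. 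Non-triviality for $\theta<1$ is immediate from $\Psi\not\equiv0$.

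For $1<\theta\le1+1/s$ I will compute the boundary values directly. With $\alpha\coloneqq\pi s$, $x\coloneqq t^s/\lambda$ and $z=te^{i\pi}$, clearing the denominator $|t^se^{i\alpha}-\lambda|^2$ gives
\[
 \operatorname{sign}\operatorname{Im}\Psi(te^{i\pi})=\operatorname{sign} g(x),\qquad g(x)\coloneqq x^\theta\sin(\alpha(\theta-1))-x^{\theta-1}\sin(\alpha\theta)+\sin\alpha .
\]
The required sign condition for a complete Bernstein function then amounts to $g\ge0$ on $(0,\infty)$. The sign on the whole upper half-plane follows from the boundary values by the Poisson representation, once we check that $\Psi(z)=O(|z|)$ at infinity and that $\Psi$ is bounded near $0$.

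Note that $g(0^+)=\sin\alpha>0$, and $\sin(\alpha(\theta-1))\ge0$ because $s(\theta-1)\le1$. If $\sin(\alpha\theta)\le0$, then $g>0$ trivially. Otherwise $g'(x)=x^{\theta-2}\bigl[\theta x\sin(\alpha(\theta-1))-(\theta-1)\sin(\alpha\theta)\bigr]$ has a unique zero $x_*$, which is a global minimum of $g$. Substituting $x_*$, the condition $g(x_*)\ge0$ becomes
\[
 \sin(\alpha\theta)^\theta\le\frac{\theta^\theta}{(\theta-1)^{\theta-1}}\,\sin\alpha\,\sin(\alpha(\theta-1))^{\theta-1}.
\]

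This trigonometric inequality is the step I expect to be the main obstacle. I would first try the concavity of $\log\sin$ on $(0,\pi)$. Write $\sin(\alpha\theta)=\sin\alpha\cos(\alpha(\theta-1))+\cos\alpha\sin(\alpha(\theta-1))$. Then apply weighted AM--GM with weights $1/\theta$ and $(\theta-1)/\theta$ to the two summands, after bounding $\cos$ by $\sin$-ratios via $\tan y\ge y$-type estimates. If that fails, I would argue on the $\theta$-side instead. For fixed $x$, $g$ is the imaginary part of $x^{\theta-1}e^{i\alpha(\theta-1)}(xe^{-i\alpha}\cdots)$, and a convexity argument in $\theta$ on $[1,1+1/s]$ would give the result, using $g\equiv0$ at $\theta=1$ and the explicit endpoint $\theta=1+1/s$.
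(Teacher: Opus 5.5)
\textbf{There is a genuine gap: you never prove the complete Bernstein property for $1<\theta\le1+1/s$.}

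Your treatment of $0<\theta<1$ and of the normalisations is sound. It differs from the paper only in packaging. The difference quotient
$(w^\theta-\lambda^\theta)/(w-\lambda)=\frac{\sin\pi\theta}{\pi}\int_0^\infty\frac{t^\theta}{(t+\lambda)(w+t)}\,\mathrm dt$
is Stieltjes. It comes from the complete Bernstein representation of $w^\theta$, not a Stieltjes one. A Stieltjes function composed with a complete Bernstein function is Stieltjes. You should also record explicitly that $d=\lim_{r\to\infty}\Psi(r)=0$.

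The gap is in the case $1<\theta\le1+1/s$, which is the heart of the proposition: you never show that $\operatorname{Im}\Psi(z)\ge0$ for $\operatorname{Im}z>0$.
\begin{enumerate}[(a)]
\item You reduce it to the inequality $\sin(\alpha\theta)^\theta\le\cdots$. This goes via boundary values on the cut plus a Poisson/Phragm\'en--Lindel\"of step that is itself only asserted. At $\theta=1+1/s$ the boundary data grow linearly, so you would first have to pass to $\Psi(z)-z$.
\item You then only list strategies for that inequality. The first points the wrong way: weighted AM--GM bounds a sum from \emph{below} by a geometric mean, whereas you need an upper bound for $\sin(\alpha\theta)$. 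Also, $\cos\alpha<0$ when $s>1/2$.
\item AM--GM cannot help here at all. After taking $\theta$-th roots, your condition reads $\sin(\alpha\theta)\le\min_{\nu>0}\bigl(\sin\alpha\,\nu^{(\theta-1)/\theta}+\sin(\alpha(\theta-1))\,\nu^{-1/\theta}\bigr)$. Your right-hand side is exactly the AM--GM value of this minimum, so AM--GM only gives back $g\ge0$.
\item The convexity-in-$\theta$ idea is not carried out.
\end{enumerate}

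\textbf{The missing idea} is the averaging formula the paper uses:
\[
 \Psi(z)=\theta\int_0^1\bigl((1-\tau)\lambda+\tau z^s\bigr)^{\theta-1}\,\mathrm d\tau .
\]
Take $\operatorname{Im}z>0$ and $\tau\in(0,1]$. The base is a convex combination of $\lambda>0$ and $z^s$, with positive weight on $z^s$, so it lies in the open sector $0<\operatorname{Arg}<s\pi$. Its $(\theta-1)$-th power therefore has argument in $(0,(\theta-1)s\pi)\subset(0,\pi)$, precisely because $s(\theta-1)\le1$. Hence $\operatorname{Im}\Psi>0$ on the whole upper half-plane, with no boundary analysis or growth control. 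The same line gives $\operatorname{Im}\Psi<0$ when $\theta<1$, so one formula handles both regimes.

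Your route can also be closed. Put $A=\alpha$, $B=\alpha(\theta-1)$, and assume $A+B<\pi$ (the case $\sin(\alpha\theta)\le0$ you already handled). Then your inequality is equivalent to $\psi(A+B)\le\psi(A)+\psi(B)$ for $\psi(y)=y\log(\sin y/y)$. This holds because $\psi$ is concave on $(0,\pi)$, since $\psi''=2(\cot y-1/y)+y(1/y^2-\csc^2y)<0$, and $\psi(0^+)=0$.

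As written, however, the proposal does not establish the complete Bernstein property for $\theta>1$. The representation, the values of $a$ and $b$, and $\sigma\ne0$ in that range all depend on it.
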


\begin{proof}
\step{1} We first extend $\Psi$ to the slit plane. The fundamental
theorem of calculus gives
\[
 \Psi(r)=\theta\int_0^1\bigl((1-\tau)\lambda+\tau r^s\bigr)^{\theta-1}\,\mathrm d\tau,\qquad r\ge0.
\]

We use the principal branch of every complex power. If
$z\in\mathbb C\setminus(-\infty,0]$, then
$\operatorname{Arg}(z^s)=s\operatorname{Arg}z$. Moreover,
$(1-\tau)\lambda+\tau z^s$ stays in the sector between the positive
real axis and $z^s$. Hence
\[
 \Psi(z)\coloneqq \theta\int_0^1\bigl((1-\tau)\lambda+\tau z^s\bigr)^{\theta-1}\,\mathrm d\tau
\]
defines a holomorphic function on the slit plane.

\step{2} Assume that $0<\theta<1$. If
$\operatorname{Im}z>0$, then, for $0<\tau\le1$,
\[
 -\pi<\operatorname{Arg}\left(\bigl((1-\tau)\lambda+\tau z^s\bigr)^{\theta-1}\right)<0.
\]
Thus $\operatorname{Im}\Psi(z)<0$. By conjugation, the imaginary part
has the opposite sign in the lower half-plane, while $\Psi>0$ on
$(0,\infty)$. Therefore $\Psi$ has no zeros on the slit plane. Moreover, $\Psi(0+)=\lambda^{\theta-1}$. Since $\Psi$ is positive on $(0,\infty)$ and maps the upper half-plane into the lower half-plane (and conversely), $\Psi$ is a \emph{Stieltjes function} by \cite[Corollary~7.4]{SchillingSongVondracek2012}. Consequently, $1/\Psi$ is a
\emph{complete Bernstein function} by \cite[Theorem~7.3]{SchillingSongVondracek2012}. By \cite[Definition~2.1 and the following paragraph]{SchillingSongVondracek2012}, there are unique $c,d\geq0$ and a unique positive measure $\sigma$ on $(0,\infty)$, satisfying
\[
  \int_{(0,\infty)}\frac{\sigma(\mathrm dt)}{1+t}<\infty,
\]
such that
\begin{equation}\label{eq:Psi-Stieltjes-representation}
 \Psi(r)=\frac{c}{r}+d+
 \int_{(0,\infty)}\frac{\sigma(\mathrm dt)}{r+t}.
\end{equation}
Applying Lebesgue's dominated convergence theorem to \cref{eq:Psi-Stieltjes-representation}, we obtain
\[
 c=\lim_{r\downarrow0}r\Psi(r),\qquad d=\lim_{r\uparrow\infty}\Psi(r).
\]
By \cref{eq:Psi-definition}, both limits are zero. 
This proves the first formula in \cref{eq:Psi-low-Stieltjes}. Since
$\Psi(0)=\lambda^{\theta-1}$, the monotone convergence theorem gives the second
one. Finally, $\sigma\ne0$, since otherwise $\Psi\equiv0$.

\step{3} Let $1\le\theta\le1+1/s$. If $\theta>1$ and
$\operatorname{Im}z>0$, then, for $0<\tau\le1$,
\[
 0<\operatorname{Arg}
 \left(\bigl((1-\tau)\lambda+\tau z^s\bigr)^{\theta-1}\right)<\pi
\]
unless $\theta=1$. Consequently, $\Psi$ is a complete Bernstein function by \cite[Theorem~6.2(v)]{SchillingSongVondracek2012}.

By \cite[Theorem~6.2(vi)]{SchillingSongVondracek2012}, \cref{eq:Psi-CBF-representation} holds and the representing measure
satisfies the stated integrability condition. The uniqueness of $a$, $b$, and $\sigma$ follows from \cite[Remark~6.4]{SchillingSongVondracek2012}.
Applying Lebesgue's dominated convergence theorem to
\cref{eq:Psi-CBF-representation}, we obtain
\[
 a=\lim_{r\downarrow0}\Psi(r)=\lambda^{\theta-1},
 \qquad
 b=\lim_{r\uparrow\infty}\frac{\Psi(r)}{r}.
\]
Using \cref{eq:Psi-definition}, we obtain the two values of $b$ in \cref{eq:Psi-CBF-coefficients}.

If $\theta=1$, then $\Psi\equiv1$, so $a=1$, $b=0$, and
$\sigma=0$. Let now $\theta>1$. If
$1<\theta<1+1/s$, then $b=0$ and
\[
 \Psi(r)\sim r^{s(\theta-1)}\longrightarrow\infty.
\]
Thus $\Psi$ is not the constant $a$, and $\sigma\ne0$. At the
endpoint $\theta=1+1/s$, we have $b=1$ and
\[
 \Psi(r)-r=\lambda\frac{r-\lambda^{1/s}}{r^s-\lambda}\sim\lambda r^{1-s}\longrightarrow\infty.
\]
Hence $\Psi\ne a+r$, and again $\sigma\ne0$. This completes the
proof.
\end{proof}

\begin{remark}[Reciprocal integers]\label{rem:reciprocal-integers}
Let $m\ge2$ be an integer, let $s=1/m$, and choose
$\theta=m=1/s$. Then
\[
 \frac{r-\lambda^m}{r^{1/m}-\lambda}=\sum_{j=0}^{m-1}\lambda^j r^{(m-1-j)/m}.
\]
Thus the resolvent representation is replaced by a finite algebraic
sum, as in the iteration argument of
\cite[proof of Theorem~1.1]{KassmannSilvestre2014}.
\end{remark}

\section{The power comparison}\label{sec:comparison}

We now prove \cref{thm:power}. The proof is organized around positive
solutions of a family of local equations.

\begin{proof}[Proof of \cref{thm:power}]
Let $\mu$ be the exterior measure in \cref{lem:defect-measure}.

\step{1} For $t>0$, let
\[
 v_t\coloneqq (-\Delta+t)^{-1}\mu.
\]
The resolvent maps $H^{-s}(\R^n)$ into $H^{2-s}(\R^n)$. Moreover,
$v_t$ is the convolution of $\mu$ with the Bessel kernel. Since a
positive tempered measure has at most polynomial growth, while the
Bessel kernel and its derivatives decay exponentially away from the
origin, this convolution is finite and smooth away from
$\supp\mu$. The kernel is strictly positive, while $\mu$ is non-zero
and non-negative. Therefore
\[
 v_t>0\quad\text{in }\Omega.
\]
Since $\mu=0$ in $\Omega$, $v_t$ solves
\[
 (-\Delta+t)v_t=0\quad\text{in }\Omega.
\]
It follows by interior regularity and iteration that
\begin{equation}\label{eq:Bessel-iterates}
 v_t\in C^\infty(\Omega),\qquad \Delta^k v_t=t^k v_t>0\quad\text{in }\Omega,\quad k\in\mathbb{N}_0.
\end{equation}

We also recall the standard facts about $u$. By the Picone inequality,
$\lambda=\lambda_s(\Omega)$, and
$\lambda>0$ by
\cref{lem:defect-measure}. Interior regularity gives
$u\in C^\infty(\Omega)$; see
\cite[Corollary~7.20]{Grubb2009}. The strong minimum principle gives
$u>0$ in $\Omega$, and the first eigenspace is one-dimensional;
see \cite[Propositions~7.1 and~3.4]{FranzinaLicheri2022}.

\step{2} Let $0<\theta<1$, and let $\sigma$ be the measure in
\cref{eq:Psi-low-Stieltjes}. For
$\eta\in C_c^\infty(\Omega)$,
\[
 \|(-\Delta+t)^{-1}\eta\|_{H^s}\le\frac1t\|\eta\|_{H^s}.
\]
Thus \cref{eq:Psi-low-Stieltjes} and its integrability condition justify
the following integral in $H^s(\R^n)$:
\[
 \Psi(-\Delta)\eta=\int_{(0,\infty)}(-\Delta+t)^{-1}\eta\,\sigma(\mathrm dt).
\]
Here and below, $\Psi(-\Delta)$ denotes the Fourier multiplier
defined by
\[
 \mathcal{F}\bigl(\Psi(-\Delta)\eta\bigr)(\xi)=\Psi(|\xi|^2)\mathcal{F}(\eta)(\xi).
\]
The identity
\[
 \lambda^\theta-r^{s\theta}=\Psi(r)(\lambda-r^s)
\]
and self-adjointness now give
\begin{equation}\label{eq:low-power-resolvent}
\begin{aligned}
 \left\langle(\lambda^\theta-(-\Delta)^{s\theta})u,\eta\right\rangle
 &=\left\langle\mu,\Psi(-\Delta)\eta\right\rangle=\int_{(0,\infty)}\int_\Omega v_t\eta\,\mathrm dx\,\sigma(\mathrm dt).
\end{aligned}
\end{equation}

Since $\sigma\ne0$, there is a compact interval
$J\Subset(0,\infty)$ with $\sigma(J)>0$. Apply
\cref{eq:low-power-resolvent} with $\Delta^k\eta$, and use
self-adjointness together with \cref{eq:Bessel-iterates}. A compact
exhaustion of $(0,\infty)$ then gives, in the distributional sense,
\[
 \Delta^k\bigl[(\lambda^\theta-(-\Delta)^{s\theta})u\bigr]\ge\int_J t^k v_t\,\sigma(\mathrm dt)\quad\text{in }\Omega.
\]
The Bessel-kernel estimates above are uniform for $t\in J$, so we may
differentiate under the integral. The function on the right is
therefore smooth and strictly positive. On the other hand, the
expression on the left is smooth in $\Omega$. Hence the distributional
inequality is pointwise, and \cref{eq:iterated-low-comparison} follows.

Notice also that the case $\theta=1$ is exactly the weak equation.

\step{3} It remains to consider $1<\theta\le1+1/s$. Let $a,b,\sigma$
be given by \cref{eq:Psi-CBF-representation}. For every
$\eta\in C_c^\infty(\Omega)$,
\[
 \|(-\Delta)(-\Delta+t)^{-1}\eta\|_{H^s}
 \le
 \begin{cases}
  \|\eta\|_{H^s},&0<t\le1,\\
  t^{-1}\|(-\Delta)\eta\|_{H^s},&t\ge1.
 \end{cases}
\]
Together with \cref{eq:Psi-CBF-coefficients}, this proves that
\[
 \Psi(-\Delta)\eta=a\eta+b(-\Delta)\eta+\int_{(0,\infty)}(-\Delta)(-\Delta+t)^{-1}\eta\,\sigma(\mathrm dt)
\]
is an $H^s(\R^n)$-valued integral. We may therefore pair it with
$\mu\in H^{-s}(\R^n)$. Since $\eta$ and $(-\Delta)\eta$ are
supported in $\Omega$, their pairings with $\mu$ vanish. Using
\[
 (-\Delta)(-\Delta+t)^{-1}=\Id-t(-\Delta+t)^{-1},
\]
we obtain
\begin{equation}\label{eq:high-power-resolvent}
 \left\langle((-\Delta)^{s\theta}-\lambda^\theta)u,\eta\right\rangle=-\left\langle\mu,\Psi(-\Delta)\eta\right\rangle=\int_{(0,\infty)}t\int_\Omega v_t\eta\,\mathrm dx\,\sigma(\mathrm dt).
\end{equation}
Notice that the estimates above also cover the endpoint
$\theta=1+1/s$, where $b=1$.

The measure $\sigma$ is non-zero by \cref{prop:power-quotient}.
Choose again a compact interval $J\Subset(0,\infty)$ with
$\sigma(J)>0$. Apply \cref{eq:high-power-resolvent} with
$\Delta^k\eta$, use \cref{eq:Bessel-iterates}, and pass through a
compact exhaustion of $(0,\infty)$. As in Step~2, this gives
\[
 \Delta^k\bigl[((-\Delta)^{s\theta}-\lambda^\theta)u\bigr]\ge\int_J t^{k+1}v_t\,\sigma(\mathrm dt)\quad\text{in }\Omega
\]
in the distributional sense. The Bessel-kernel estimates are uniform
for $t\in J$, so the right-hand side is smooth and strictly positive.
The left-hand side is smooth. Thus \cref{eq:iterated-subharmonicity}
holds pointwise, and the proof is complete.
\end{proof}

\begin{proof}[Proof of \cref{thm:superharmonicity}]
The positivity, smoothness, identification of $\lambda$, and
simplicity were recalled in Step~1 above. Taking $\theta=1/s$ and
$k=0$ in \cref{eq:iterated-subharmonicity}, we find
\[
 -\Delta u>\lambda^{1/s}u>0.
\]
Dividing by $u$ and using the chain rule gives
\[
 -\Delta\log u=\frac{-\Delta u}{u}+|\nabla\log u|^2>\lambda^{1/s}+|\nabla\log u|^2,
\]
as desired.
\end{proof}

\section{Log-concavity in a ball}\label{sec:log}

We finish with the proof of \cref{cor:ball-log-concavity}. The argument
uses the local inequalities in \cref{thm:power} and the radial equation.

\begin{proof}[Proof of \cref{cor:ball-log-concavity}]
We write $\lambda\coloneqq \lambda_s(B_R)$, and we divide the proof into three
steps.

\step{1} Rotation invariance and uniqueness imply that $u$ is radial.
Define $g\coloneqq (-\Delta-\lambda^{1/s})u$. As an abuse of notation, radial
profiles are denoted in the same way. Taking $\theta=1/s$ and $k=0,1$ in
\cref{eq:iterated-subharmonicity}, we obtain
\[
 g>0,\qquad \Delta g>0\quad\text{in }B_R.
\]
For any smooth radial function $f=f(r)$,
\begin{equation}\label{eq:radial-laplacian}
 \Delta f(r)=f''(r)+\frac{n-1}{r}f'(r)=\frac1{r^{n-1}}\bigl(r^{n-1}f'(r)\bigr)',\qquad r>0.
\end{equation}
Moreover, radial smoothness gives $f'(0)=0$. Applying
\cref{eq:radial-laplacian} to $g$ and integrating from $0$ to
$r$, we obtain
\[
 r^{n-1}g'(r)=\int_0^r t^{n-1}\Delta g(t)\,\mathrm dt>0,\qquad 0<r<R.
\]
On the other hand, $-\Delta u>0$. Applying the same computation to
$u$, we find
\[
 r^{n-1}u'(r)=\int_0^r t^{n-1}\Delta u(t)\,\mathrm dt<0,\qquad 0<r<R.
\]
Therefore, if
\[
 h\coloneqq \frac gu,
\]
then
\begin{equation}\label{eq:h-increasing}
 h'(r)=\frac{g'(r)u(r)-g(r)u'(r)}{u(r)^2}>0,\qquad0<r<R.
\end{equation}

\step{2} We estimate the tangential and radial derivatives of
$\log u$. Let
\[
 q(r)\coloneqq -\frac{u'(r)}{u(r)}.
\]
Then $q(0)=0$, $q>0$ on $(0,R)$, and the equation
$-\Delta u=\lambda^{1/s}u+g$ becomes
\begin{equation}\label{eq:ball-riccati}
 q'=q^2-\frac{n-1}{r}q+\lambda^{1/s}+h.
\end{equation}
Indeed, differentiating $q=-u'/u$ and using the radial equation gives
\[
 q'=-\frac{u''}{u}+\left(\frac{u'}u\right)^2=\frac{n-1}{r}\frac{u'}u+\lambda^{1/s}+\frac gu+q^2=q^2-\frac{n-1}{r}q+\lambda^{1/s}+h.
\]

We next compute the value at the origin. Since $u$ is smooth and
radial,
\[
 u'(0)=0,\qquad D^2u(0)=u''(0)\Id,\qquad \Delta u(0)=n u''(0).
\]
Evaluating $-\Delta u=\lambda^{1/s}u+g$ at the origin gives
\[
 -n u''(0)=\lambda^{1/s}u(0)+g(0).
\]
Also,
\[
 q'(0)=-\frac{u''(0)}{u(0)}+\left(\frac{u'(0)}{u(0)}\right)^2=-\frac{u''(0)}{u(0)}.
\]
Consequently,
\[
 q'(0)=-\frac{u''(0)}{u(0)}=\frac{\lambda^{1/s}+h(0)}n>\frac{\lambda^{1/s}}n.
\]

Finally, the restriction of a smooth radial function to any line
through the origin is even. Taylor's formula therefore yields
\[
 u(r)=u(0)+\frac12u''(0)r^2+O(r^4),\qquad u'(r)=u''(0)r+O(r^3).
\]
Since $u(0)>0$, we conclude that
\[
 q(r)=-\frac{u''(0)r+O(r^3)}{u(0)+O(r^2)}=q'(0)r+O(r^3)\qquad\text{as }r\downarrow0.
\]

Using $nq'(0)=\lambda^{1/s}+h(0)$ in
\cref{eq:ball-riccati}, we compute
\[
 \left[r^n\left(\frac{q(r)}r-q'(0)\right)\right]'=r^{n-1}\left(q'(r)+\frac{n-1}{r}q(r)-nq'(0)\right)=r^{n-1}\bigl(q(r)^2+h(r)-h(0)\bigr)>0.
\]
Here we used $q>0$ and the strict monotonicity of $h$. Moreover,
the expansion above shows that
\[
 r^n\left(\frac{q(r)}r-q'(0)\right)=O(r^{n+2})\qquad\text{as }r\downarrow0.
\]
Integrating from the origin therefore gives
\begin{equation}\label{eq:ball-tangential-bound}
 \frac{q(r)}r>q'(0)>\frac{\lambda^{1/s}}n,\qquad 0<r<R.
\end{equation}

To estimate the radial derivative, set
$p(r)\coloneqq q'(r)-q'(0)$. Differentiating
\cref{eq:ball-riccati} and using $q'=p+q'(0)$, we obtain
\[
 p'=2qq'-\frac{n-1}{r}q'+\frac{n-1}{r^2}q+h'=\left(2q-\frac{n-1}{r}\right)p+2q'(0)q+\frac{n-1}{r}\left(\frac qr-q'(0)\right)+h'.
\]
On the other hand,
\[
 \frac{\bigl(r^{n-1}u^2\bigr)'}{r^{n-1}u^2}=\frac{n-1}{r}+2\frac{u'}u=\frac{n-1}{r}-2q.
\]
Multiplying the equation for $p$ by $r^{n-1}u^2$, we thus find
\[
 \left[r^{n-1}u(r)^2\bigl(q'(r)-q'(0)\bigr)\right]'=r^{n-1}u(r)^2\left[2q'(0)q(r)+\frac{n-1}{r}\left(\frac{q(r)}r-q'(0)\right)+h'(r)\right]>0,
\]
where we used \cref{eq:h-increasing,eq:ball-tangential-bound}. Since
$q'(r)-q'(0)\to0$ as $r\downarrow0$, the expression inside the
derivative on the left tends to zero at the origin. Integration now yields
\begin{equation}\label{eq:ball-radial-bound}
 q'(r)>q'(0)>\frac{\lambda^{1/s}}n,\qquad0<r<R.
\end{equation}

\step{3} We now compute the Hessian. For $x\ne0$, let
$e_r=x/|x|$. If $w=w(r)$ is radial, then
\[
 D^2w=w''e_r\otimes e_r+\frac{w'}r\bigl(\Id-e_r\otimes e_r\bigr).
\]
Taking $w=\log u$, and recalling that $w'=-q$ and $w''=-q'$,
we obtain
\[
 D^2\log u(x)=-q'(|x|)\,e_r\otimes e_r-\frac{q(|x|)}{|x|}\bigl(\Id-e_r\otimes e_r\bigr)<-q'(0)\,\Id=D^2\log u(0)
\]
by \cref{eq:ball-tangential-bound,eq:ball-radial-bound}. Finally,
\[
 D^2\log u(0)=\frac{\Delta u(0)}{n u(0)}\,\Id=-\frac{\lambda^{1/s}+h(0)}n\,\Id<-\frac{\lambda^{1/s}}n\,\Id,
\]
which is \cref{eq:ball-log-concavity}.
\end{proof}

\vspace{5mm}
\section*{Acknowledgements}

N.~De Nitti is a member of the Gruppo Nazionale per l'Analisi
Matematica, la Probabilit\`a e le loro Applicazioni (GNAMPA) of the
Istituto Nazionale di Alta Matematica (INdAM) and has received support
from the INdAM--GNAMPA Project 2026 \textit{Modelli Non-locali in
Fluidodinamica, Traffico ed Elasticit\`a} (CUP:~E53C25002010001).

X.~Fern\'andez-Real is supported by the Swiss State Secretariat for
Education, Research and Innovation (SERI) under contract number
MB22.00034 through the project TENSE, by the Swiss National Science
Foundation under grant PZ00P2\_208930, and by the AEI project
PID2024-156429NB-I00.

\printbibliography

\vfill 

\end{document}